  \newfont\fiverm{cmr5} 
\newtheorem{thm}{Theorem}[section]
\newtheorem{lem}[thm]{Lemma}
\newtheorem{thm-con}[thm]{Theorem-Conjecture}
\numberwithin{equation}{section}
\theoremstyle{definition}
\newcommand{\f}{\Bbb F}
\newcommand{\pf}{\Bbb P^1(\f_q)}
\begin{document}

\title[Rational functions of Degree Four]{Rational functions of Degree Four that Permute the Projective Line over a Finite Field}

\author[Xiang-dong Hou]{Xiang-dong Hou}
\address{Department of Mathematics and Statistics,
University of South Florida, Tampa, FL 33620}
\email{xhou@usf.edu}

\keywords{finite field, permutation, rational function}

\subjclass[2010]{11R58, 11T06, 14H05}

\begin{abstract}
Recently, rational functions of degree three that permute the projective line $\pf$ over a finite field $\f_q$ were determined by Ferraguti and Micheli. In the present paper, using a different method, we determine all rational functions of degree four that permute the $\pf$.
\end{abstract}

\maketitle

\section{Introduction}

Let $\Bbb P^1(\f_q)=\f_q\cup\{\infty\}$ be the projective line over the finite field $\f_q$. A rational function in $\f_q(X)$ defines a mapping from $\Bbb P^1(\f_q)$ to itself. For $0\ne f(X)=P(X)/Q(X)\in\f_q(X)$, where $P,Q\in\f_q[X]$ and $\text{gcd}(P,Q)=1$, we define $\deg f=\max\{\deg P,\deg Q\}$. Let $G(q)=\{\phi\in\f_q(X):\deg\phi=1\}$. Then $(G(q),\circ)$ is a group which is isomorphic to $\text{PGL}(2,\f_q)$. Elements of $G(q)$ induce permutations of $\Bbb P^1(\f_q)$.  Two rational functions $f,g\in\f_q(X)$ are called {\em equivalent} if there exist $\phi,\psi\in G(q)$ such that $f=\phi\circ g\circ \psi$. A polynomial $f(X)\in\f_q[X]$ that permutes $\f_q$ is called a {\em permutation polynomial} (PP) of $\f_q$; a rational function $f(X)\in\f_q(X)$ that permutes $\Bbb P^1(\f_q)$ is called a {\em permutation rational function} (PR) of $\Bbb P^1(\f_q)$. Permutation polynomials have been studied extensively. In particular, PPs of degree $\le 7$, including degree 8 in characteristic 2, have been classified \cite{Dickson-AM-1896, Fan-arXiv1812.02080, Fan-arXiv1903.10309, Li-Chandler-Xiang-FFA-2010, Shallue-Wanless-FFA-2013}. However, little is known about PRs of low degree of $\pf$. Recently, Ferraguti and Micheli \cite{Ferraguti-Micheli-arXiv1805.03097} classified PRs of degree $3$ of $\pf$ using the Chebotarev theorem for function fields. In \cite{Hou-ppt}, we showed that the same result can be obtained from a formula by Carlitz. Moreover, also using Carlitz's formula, we were able to determine all PRs of $\pf$ degree 4 except for a case where a certain condition is needed. In the present paper, we determine all degree 4 PRs in this case without any condition; therefore we now have completely determined degree 4 PRs of $\pf$. The approach of the present paper, different from that of \cite{Hou-ppt}, is based on the Hasse-Weil bound for the number of zeros of absolute irreducible polynomials over finite fields. Notably, when the aforementioned condition is removed, two new infinite classes of degree 4 PRs arise; see Theorems~\ref{T2.1} and \ref{T3.1}.

In Section~2 we recall the previous results on degree 4 PRs from \cite{Hou-ppt}. The unsettled case in \cite{Hou-ppt} is covered by Theorems~\ref{T2.1} and \ref{T3.1} of the present paper when $q$ is not small; these two theorems are proved in Sections~3 and 4, respectively. Sporadic PRs with small values of $q$ that are not covered by Theorems~\ref{T2.1} and \ref{T3.1} are given in Section~5.

\section{Previous Results}

If $f(X)$ is a PR of $\pf$ of degree $4$ which is not equivalent to a polynomial, then $f(X)$ is equivalent to $P(X)/Q(X)$, where $P,Q\in\f_q[X]$, $\text{gcd}(P,Q)=1$, $\deg Q<\deg P=4$ and $Q$ has no root in $\f_q$.

Let $f(X)=P(X)/Q(X)$, where $P,Q\in\f_q[X]$, $\text{gcd}(P,Q)=1$, $\deg Q<\deg P=4$ and $Q$ has no root in $\f_q$. If $\deg Q=2$, all such PRs of $\pf$ have been determined in \cite{Hou-ppt}; they only exist for  $q\le 8$.  Now assume that $\deg Q=3$. We can write 
\begin{equation}\label{Eq-2.1}
f(X)=X+ \frac{b}{X-r}+\frac{b^q}{X-r^q}+\frac{b^{q^2}}{X-r^{q^2}},
\end{equation}
where $b\in\f_{q^3}^*$ and $r\in\f_{q^3}$. Under the condition that $b\in\f_q^*$, all such PRs have been determined. They all occur in characteristic 3: when $q=3$, $f$ is equivalent to $X+(X^3-X+1)^{-1}$ or $X-(X^3-X+1)^{-1}$; when $q=3^n$, $n>1$, $f$ is equivalent to $X+(X^3-X+b)^{-1}$, where $b\in\f_q$ is a fixed element such that $\text{Tr}_{q/3}(b)\ne 0$. These results were obtained using a formula by Carlitz on power sums of reciprocals of polynomials. When the condition that $b\in\f_q^*$ is removed from \eqref{Eq-2.1}, computation of power sums becomes very complicated. The conclusion in \cite{Hou-ppt} about the case $\deg Q=3$ is incomplete. In the present paper, through a different approach, we will determine all PRs of degree $4$ with $\deg Q=3$ without any condition. 

In our notation, for a polynomial $F(X_1,\dots,X_n)\in\f_a[X_1,\dots,X_n]$,
\[
V_{\f_q^n}(F)=\{(x_1,\dots,x_n)\in\f_q^n: F(x_1,\dots,x_n)=0\};
\]
for a homogenous polynomial $F(X_0,\dots,X_n)\in\f_q[X_0,\dots,X_n]$, 
\[
V_{\Bbb P^n(\f_q)}(F)=\{(x_0:\ldots :x_n)\in\Bbb P^n(\f_q): F(x_0,\dots,x_n)=0\}.
\] 
A polynomial $F(X_1,\dots,X_n)$ is called {\em cyclic} if $F(X_2,\dots,X_n,X_1)=F(X_1,\dots,X_n)$. For an expression $\alpha(r_1,\dots,r_n)$ involving values $r_1,\dots, r_n$, we define
\[
\sum_{\text{cyc}}\alpha(r_1,\dots,r_n)=\alpha(r_1,\dots,r_n)+\alpha(r_2,\dots,r_n,r_1)+\cdots+\alpha(r_n,r_1,\dots,r_{n-1}).
\]

\section{Main Theorem}

Let $f(X)=P(X)/Q(X)$, where $P,Q\in\f_q[X]$, $\deg P=4$, $\deg Q=3$, $Q$ is irreducible over $\f_q$ and $Q\nmid P$. Up to equivalence, we may write
\begin{equation}\label{2.1}
f(X)=X+\frac{aX^2+bX+c}{Q(X)},
\end{equation}
where $a,b,c\in\f_q$ are not all $0$. If $\text{char}\,\f_q\ne 3$, we may assume that $Q(X)=X^3+dX+e$; when $\text{char}\,\f_q=3$, we may assume that $Q(X)=X^3+dX+e$ or $Q(X)=X^3+dX^2+e$. When $Q(X)=X^3+dX+e$, we have the following theorem.

\begin{thm}\label{T2.1} Let
\begin{equation}\label{Eq-3.2}
f(X)=X+\frac{aX^2+bX+c}{X^3+dX+e}\in\f_q(X),
\end{equation}
where $a,b,c\in\f_q$ are not all $0$ and $X^3+dX+e$ is irreducible over $\f_q$. If $q\ge 113$, then  $f$ is a PR of $\pf$ if and only if $a = -3 d$, $b = -9 e$ and $c = d^2$. The condition is sufficient for all $q$.
\end{thm}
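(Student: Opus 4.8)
The plan is to translate the condition that $f$ permutes $\pf$ into a statement about rational points on a curve, and then apply the Hasse–Weil bound. Since $f(X)=X+R(X)$ with $R(X)=(aX^2+bX+c)/(X^3+dX+e)$, the function $f$ permutes $\pf$ if and only if, for all distinct $x,y\in\pf$, we have $f(x)\neq f(y)$; because $Q$ is irreducible of degree $3$, the point $\infty$ is a fixed point and we only need injectivity on $\f_q$. Writing $f(x)=f(y)$ and clearing denominators, the condition $f(x)=f(y)$ with $x\neq y$ becomes $G(x,y)=0$ where $G(x,y)=\dfrac{f(x)-f(y)}{x-y}$ is a symmetric polynomial in $x,y$ (after multiplying by $Q(x)Q(y)$). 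So $f$ is a PR if and only if the affine curve $G(x,y)=0$ has no $\f_q$-points off the diagonal and off the lines $Q(x)=0$, $Q(y)=0$ (the latter having no $\f_q$-points anyway since $Q$ is irreducible).

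**Main steps.** First I would compute $G(X,Y)$ explicitly: it is a polynomial of degree $3$ in each of $X$ and $Y$ (total degree $\le 6$), symmetric, with coefficients that are polynomials in $a,b,c,d,e$. Second, I would determine when $G$ is absolutely irreducible. If $G$ is absolutely irreducible, then by the Hasse–Weil bound the number $N$ of $\f_q$-points on the projective closure satisfies $|N-(q+1)|\le 2g\sqrt q + C$ for suitable constants depending only on the degree (genus $g$ bounded by $(5)(4)/2=10$, say, using $\deg G\le 6$), and hence for $q$ large enough ($q\ge 113$ should be the threshold coming from this estimate plus bookkeeping of points at infinity and on the excluded lines) the curve has an $\f_q$-point that is not on the diagonal and not at infinity — forcing $f$ to not be a PR. Third, therefore a necessary condition for $f$ to be a PR (when $q\ge 113$) is that $G$ is \emph{not} absolutely irreducible, i.e. $G$ factors nontrivially over $\overline{\f_q}$. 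I would then classify all $(a,b,c,d,e)$ for which $G$ has a nontrivial factorization, keeping track of which factors are defined over $\f_q$ versus conjugate over an extension. Among those factorizations, the diagonal $X-Y$ is always a factor is \emph{not} true here (since we already divided by $x-y$), so the relevant splittings are into two cubics, or a quadratic times a quartic form pattern, etc.; in each case one analyzes whether the non-diagonal component still contributes an $\f_q$-point (again via Hasse–Weil on each absolutely irreducible component of lower degree, using $q\ge 113$), and rules out all but the claimed family $a=-3d$, $b=-9e$, $c=d^2$. Finally, for that specific family I would substitute back and check directly (an identity computation) that $f$ is a PR for all $q$ — presumably $f$ becomes, up to $G(q)$-equivalence, something like $X^4$ composed with Möbius maps, or $X + (\text{derivative-type expression})$, making the sufficiency transparent.

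**Expected main obstacle.** The hard part will be the factorization analysis in Step 3: enumerating all ways the symmetric bidegree-$(3,3)$ polynomial $G$ can fail to be absolutely irreducible, and for each such degenerate stratum of the parameter space $(a,b,c,d,e)$, verifying that either $Q(X)=X^3+dX+e$ fails to be irreducible over $\f_q$ (excluded by hypothesis), or the curve still has enough $\f_q$-points to break injectivity, or we land in the stated one-parameter family. Controlling the resultants and discriminants that govern this factorization — and making sure the threshold $q\ge 113$ genuinely suffices after all the Hasse–Weil error terms and the finitely many small-$q$ exceptions are accounted for — is where the real work lies. A secondary technical point is handling the case distinction between factors defined over $\f_q$ and Galois-conjugate factors over $\f_{q^2}$ or $\f_{q^3}$, since a component irreducible over $\f_q$ but geometrically reducible can still lack $\f_q$-points, whereas a geometrically irreducible component of positive genus cannot once $q\ge 113$.
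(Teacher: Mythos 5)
Your overall strategy coincides with the paper's: show via Hasse--Weil that if $f$ is a PR and $q$ is large then the symmetric curve $\bigl(f(X)-f(Y)\bigr)/(X-Y)$ (your $G$, the paper's $F$, of bidegree $(3,3)$) cannot be absolutely irreducible, then classify the degenerate parameter values, and finally verify sufficiency of the surviving family by direct computation. However, the step you explicitly defer --- ``classify all $(a,b,c,d,e)$ for which $G$ has a nontrivial factorization'' --- is not bookkeeping; it is the heart of the proof, and your proposal contains no method for carrying it out over the five-parameter family. The paper's key device, absent from your plan, is to exploit symmetry by writing $F(X,Y)=G(X+Y,XY)$ with $G(s,p)$ of degree $3$ in $p$. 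If this $G$ fails to be absolutely irreducible it must have a root $p=uX+v$ that is \emph{linear} in $X$ (a higher-degree root would force $\deg_X G(X,\rho(X))=3\deg\rho$, a contradiction), and comparing coefficients gives $u^3+du+e=0$, so $u$ has degree $3$ over $\f_q$. Eliminating $v$ and reducing modulo $u^3=-du-e$ turns the remaining equations into $\f_q$-linear relations in $1,u,u^2$, whose coefficients must all vanish; this is what yields exactly $a=-3d$, $b=-9e$, $c=d^2$ (with a short separate computation in characteristic $3$). Without this reduction, enumerating the factorization patterns of the symmetric bidegree-$(3,3)$ polynomial directly (``two cubics, quadratic times quartic, \dots'') and eliminating parameters is a substantially harder task that your sketch does not resolve. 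Moreover, the paper needs a further case you do not isolate: $F$ reducible while the symmetrized $G$ is absolutely irreducible. There the factorization must be $F=\eta\,(A(Y)X+B(Y))(A(X)Y+B(X))$, and a Galois argument shows the factor linear in $X$ is defined over $\f_q$, producing about $q$ rational points and hence a contradiction --- this disposes of your worry about geometrically reducible components with no $\f_q$-points in that subcase, but it requires an argument, not just the dichotomy you state.

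There is also a quantitative gap: with your crude genus bound $g\le\frac{1}{2}(5)(4)=10$ the Hasse--Weil error term is $20\sqrt q$ plus constants, which does not give a contradiction at $q=113$ (indeed $113-20\sqrt{113}<0$). The stated threshold comes from the sharper, bidegree-tailored estimate the paper proves as Lemma~\ref{L}: Riemann's inequality gives $g\le(d_1-1)(d_2-1)=4$ for the bidegree-$(3,3)$ curve, and one must also pass from degree-one places to projective points by bounding the fibres over the (at most $\frac{1}{2}(d-1)(d-2)$) singular points; this yields $|V_{\f_q^2}(F)|\ge q-8q^{1/2}-21>6\ge|V_{\f_q}(F(X,X))|$ precisely for $q\ge 113$. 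Finally, your sufficiency check ``presumably $f$ is equivalent to something like $X^4$'' is not what happens: the verification is that under $a=-3d$, $b=-9e$, $c=d^2$ one has $F(X,Y)=\prod_{i=1}^{3}\bigl(XY-u_i(X+Y)-d-2u_i^2\bigr)$ with $u_i$ the conjugate roots of $X^3+dX+e$, and each factor has no $\f_q$-zero (no off-diagonal zero when $p=2$); some such explicit identity is needed to make the sufficiency valid for all $q$.
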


We first prove a lemma which is a version of the Hasse-Weil bound tailored for the circumstances in this paper.

\begin{lem}\label{L}
Let $F(X,Y,Z)\in\f_q[X,Y,Z]$ be an absolutely irreducible homogeneous polynomial of degree $d$. Let $d_1=\deg_X F$, $d_2=\deg_Y F$, $d_3=\deg_Z F$, and assume that $1\le d_1\le d_2\le d_3$. Then
\[
|V_{\Bbb P^2(F)}(F)|\ge q+1-2(d_1-1)(d_2-1)q^{1/2}-\frac 12(d_2-1)(d-1)(d-2).
\]
\end{lem}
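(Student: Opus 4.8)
The plan is to deduce the stated bound from the classical Hasse--Weil bound for a smooth projective curve, by carefully accounting for the loss incurred when passing to a possibly singular plane curve $C=V_{\Bbb P^2}(F)$ and for the points at infinity that may be missed. First I would invoke the Hasse--Weil bound in the form
\[
\bigl|\,|X(\f_q)| - (q+1)\,\bigr| \le 2g\,q^{1/2},
\]
where $X$ is the smooth projective model (normalization) of the curve $C$ defined by the absolutely irreducible form $F$, and $g$ is its genus. Since $C$ is a plane curve of degree $d$, its geometric genus satisfies $g\le \binom{d-1}{2}=\frac12(d-1)(d-2)$, so already $|X(\f_q)|\ge q+1-(d-1)(d-2)q^{1/2}$. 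But the desired bound has a much smaller coefficient $2(d_1-1)(d_2-1)$ on $q^{1/2}$, so the crude genus bound is not enough; the key is to use the finer genus estimate coming from the degrees $d_1,d_2$ in the individual variables.

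The main technical step is therefore to bound $g$ more sharply using $d_1=\deg_X F$ and $d_2=\deg_Y F$. Here I would use the interpretation of $C$ via one of the projections, say the projection $\pi\colon C\dashrightarrow \Bbb P^1$ away from the point $(1:0:0)$ (equivalently, viewing $F=0$ as defining a covering of the $Y,Z$-line). Because $\deg_X F = d_1$, this realizes the function field of $C$ as a degree-$d_1$ extension of a rational function field, and the Riemann--Hurwitz formula gives $2g-2 = -2d_1 + \deg(\mathfrak{d})$, where $\mathfrak d$ is the different. One then bounds the ramification: the branch locus is controlled by the $X$-discriminant of $F$, which as a form in $Y,Z$ has degree at most something like $2(d_1-1)d_2$ (or, more carefully, $(d_1-1)(2d_2-?)$), and tame ramification contributes at most $(d_1-1)$ per branch point. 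Combining these yields a bound of the shape $2g-2 \le -2d_1 + 2(d_1-1)(d_2-1) + (\text{correction})$, i.e. $2g \le 2(d_1-1)(d_2-1)$ up to a lower-order term. Feeding this into Hasse--Weil produces the $2(d_1-1)(d_2-1)q^{1/2}$ term. I expect this ramification count — getting the constant exactly right, and handling wild ramification in small characteristic — to be the main obstacle; one likely sidesteps wildness by noting it only improves the genus bound in the direction we want, or by a characteristic-free bound on the different via the discriminant.

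Finally I would pass from $|X(\f_q)|$ back to $|V_{\Bbb P^2}(F)|$. The two differ because (i) the normalization map $X\to C$ can merge several points of $X$ over a singular point of $C$, and (ii) points of $X$ at the ``missing'' fibers (over $(1:0:0)$, i.e. where the degree in $X$ drops) need not map to $\f_q$-points we are counting, though here they would only add points, so the inequality goes the right way; the genuine loss is (i). The number of singular points of $C$, counted appropriately, is at most $\binom{d-1}{2}-g \le \frac12(d-1)(d-2)$, and at a singular point at most $d_2$ points of $X$ can collapse to one (crudely, using that a fiber of the $Y$-projection has size $\le d_2$), so $|V_{\Bbb P^2}(F)| \ge |X(\f_q)| - (d_2-1)\cdot\frac12(d-1)(d-2)$ after a slightly more careful bookkeeping that matches the stated $\frac12(d_2-1)(d-1)(d-2)$. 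Assembling the pieces: $|V_{\Bbb P^2}(F)| \ge q+1 - 2(d_1-1)(d_2-1)q^{1/2} - \frac12(d_2-1)(d-1)(d-2)$, as claimed. The two places demanding care are the precise ramification bound in the genus estimate and the precise multiplicity bound at singular points; everything else is routine.
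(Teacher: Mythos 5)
Your overall architecture is the same as the paper's: apply Hasse--Weil to the degree-one places of the function field (equivalently, to the smooth model), bound the genus by $(d_1-1)(d_2-1)$ rather than by $\binom{d-1}{2}$, and then descend to the plane curve by observing that only singular points can absorb more than one place, that there are at most $\frac12(d-1)(d-2)$ of them, and that at most $d_2$ places can lie over any one point because a fibre of the degree $\le d_2$ projection to the $x$-line carries at most $[\Bbb F_q(x,y):\Bbb F_q(x)]\le d_2$ places. That second half of your sketch is essentially the paper's argument.

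The genuine gap is the one step you leave unproved, namely the genus bound $g\le(d_1-1)(d_2-1)$. Your proposed route---Riemann--Hurwitz for the degree-$d_1$ projection, with ramification controlled by the $X$-discriminant---is not carried out, and as sketched it does not close: to conclude $g\le(d_1-1)(d_2-1)$ from $2g-2=-2d_1+\deg\frak d$ you need $\deg\frak d\le 2(d_1-1)d_2$, whereas the naive count gives $\deg_Y\mathrm{Res}_X(F,F_X)\le(2d_1-1)d_2$ plus a contribution over $Y=\infty$, so the exact constant requires a finer (bihomogeneous discriminant) computation that you leave as ``(correction)'' and ``$?$''; you also need separability of $\Bbb F_q(x,y)/\Bbb F_q(y)$ for this form of Riemann--Hurwitz to apply at all. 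Moreover, your first suggested escape from wild ramification is backwards: wild ramification makes the different exponent strictly larger than $e-1$, so it weakens, not strengthens, any upper bound on $g$ obtained this way; only your second option (the different divides the discriminant, valid in every characteristic) is sound, and it is precisely where the careful degree count is needed. The paper avoids all of this by quoting Riemann's inequality \cite[Corollary~3.11.4]{Stichtenoth-2009}: for $\Bbb F_q(x,y)$ it gives $g\le([\Bbb F_q(x,y):\Bbb F_q(x)]-1)([\Bbb F_q(x,y):\Bbb F_q(y)]-1)\le(d_1-1)(d_2-1)$ in one line, with no ramification analysis and no characteristic issues; absolute irreducibility is used only to guarantee that $\Bbb F_q$ is the full constant field so that Hasse--Weil applies. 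Replacing your Riemann--Hurwitz step by this citation (or by the arithmetic-genus bound for a curve of bidegree $(d_1,d_2)$ on $\Bbb P^1\times\Bbb P^1$) turns your outline into the paper's proof.
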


\begin{proof}
Let $\f=\Bbb F_q(x,y)$, where $x$ is transcendental over $\f_q$ and $y$ is a root of $F(x,Y,1)$. Then $\f$ is a field of algebraic functions in one variable over $\f_q$ and the absolute irreducibility of $F$ implies that $\f_p$ is algebraically closed in $\f$. By Riemann's inequality \cite[Corollary~3.11.4]{Stichtenoth-2009}, the genus $g$ of $\f/\f_q$ satisfies $g\le (d_1-1)(d_2-1)$. Let $\mathcal A_1$ be the set of degree one places of $\f/\f_q$ and let $A_1=|\mathcal A_1|$. By the Hasse-Weil bound,
\[
|A_1-(q+1)|\le 2(d_1-1)(d_2-1)q^{1/2},
\]
whence
\begin{equation}\label{L-1}
A_1\ge q+1-2(d_1-1)(d_2-1)q^{1/2}.
\end{equation}
For each $\frak P\in\mathcal A_1$, let $z\in\{x,y,1\}$ be such that $\nu_\frak P(z)=\min\{\nu_\frak P(x),\nu_\frak P(y),\nu_\frak P(1)\}$, where $\nu_\frak P$ is the valuation at $\frak P$. Then $((x/z)(\frak P):(y/z)(\frak P):(1/z)(\frak P))\in V_{\Bbb P^2(\f_q)}(F)$. This defines a mapping
\[
\begin{array}{cccl}
\psi: &\mathcal A_1 &\longrightarrow & V_{\Bbb P^2(\f_q)}(F)\vspace{0.5em}\cr
&\frak P &\longmapsto & ((x/z)(\frak P),(y/z)(\frak P), (1/z)(\frak P)).
\end{array}
\]
($\psi$ is actually onto, though we do not need this fact here.) It is well known that if $(x_0:y_0:z_0)\in V_{\Bbb P^2(\f_q)}(F)$ is a simple point, then $|\psi^{-1}(x_0:y_0:z_0)|=1$. In general, given $(x_0:y_0:z_0)\in V_{\Bbb P^2(\f_q)}(F)$, we claim that $|\psi^{-1}(x_0:y_0:z_0)|\le d_2$. If $z_0=1$, let $\frak p$ be the place of the rational function field $\f_q(x)/\f_q$ that is the zero of $x-x_0$. Every place in $\psi^{-1}(x_0:y_0:1)$ lies above $\frak p$. On the other hand, the number of places of $\f/\f_q$ lying above $\frak p$ is at most $[\f:\f_q(x)]=[\f_q(x,y):\f_q(x)]\le d_2$. Hence $|\psi^{-1}(x_0:y_0:1)|\le d_2$. If $z_0=0$, without loss of generality, assume $x_0=1$. Let $\frak p$ be the place of the rational function field $\f_q(x)/\f_q$ that is the pole of $x$. Then every place in $\psi^{-1}(1:y_0:0)$ lies above $\frak p$. It follows that $|\psi^{-1}(1:y_0:0)|\le [\f:\f_q(x)]=[\f_q(x,y):\f_q(x)]\le d_2$. Hence the claim is proved. Let $S$ be the set of multiple points on $V_{\Bbb P^2(\f_q)}(F)$. By \cite[\S 5.4, Theorem~2]{Fulton-1989}, $|S|\le(d-1)(d-2)/2$. Thus
\begin{align}\label{L-2}
A_1\,&\le |V_{\Bbb P^2(\f_q)}(F)\setminus S|+d_2|S|=|V_{\Bbb P^2(\f_q)}(F)|+(d_2-1)|S|\\
&\le |V_{\Bbb P^2(\f_q)}(F)|+\frac 12(d_2-1)(d-1)(d-2). \nonumber
\end{align}
Combining \eqref{L-1} and \eqref{L-2} gives
\begin{align*}
|V_{\Bbb P^2(\f_q)}(F)|\,&\ge A_1-\frac 12(d_2-1)(d-1)(d-2)\cr
&\ge q+1-2(d_1-1)(d_2-1)q^{1/2}-\frac 12(d_2-1)(d-1)(d-2).
\end{align*}
\end{proof}

\begin{proof}[Proof of Theorem~\ref{T2.1}] ($\Rightarrow$) We have 
\[
\frac{f(X)-f(Y)}{X-Y}=\frac{F(X,Y)}{(X^3+dX+e)(Y^3+dY+e)},
\]
where
\begin{align}\label{2.2}
&F(X,Y)=\\
&-c d + b e + e^2 + a e X + d e X - c X^2 + e X^3 + a e Y + d e Y - 
 c X Y + a d X Y \cr
 &+ d^2 X Y - b X^2 Y + d X^3 Y - c Y^2 - b X Y^2 - 
 a X^2 Y^2 + e Y^3 + d X Y^3 + X^3 Y^3.\nonumber
\end{align}
Write
\begin{equation}\label{2.3}
F(X,Y)=G(X+Y,XY),
\end{equation}
where 
\begin{align}\label{2.4}
G(X,Y)=\,&-cd+be+e^2+(ae+de)X-cX^2+eX^3+(c+ad+d^2)Y\\
&+(-b-3e)XY+dX^2Y+(-a-2d)Y^2+Y^3.\nonumber
\end{align}

\medskip
{\bf Case 1.} Assume that $G(X,Y)$ is not absolutely irreducible. Then $G(X,Y)$ has a root $\rho(X)\in\overline\f_q[X]$ for $Y$. If $\deg\rho>1$, then $\deg_XG(X,\rho(X))=3\deg\rho$, which is a contradiction. So we have $\rho(X)=uX+v$ for some $u,v\in\overline\f_q$. Comparing the coefficients of $X$ in the equation $G(X,uX+v)=0$ gives
\begin{align}\label{2.5}
&-c d + b e + e^2 + c v + a d v + d^2 v - a v^2 - 2 d v^2 + v^3 =0,\\ \label{2.6}
&a e + d e + c u + a d u + d^2 u - b v - 3 e v - 2 a u v - 4 d u v + 
 3 u v^2 =0,\\ \label{2.7}
&-c - b u - 3 e u - a u^2 - 2 d u^2 + d v + 3 u^2 v =0,\\ \label{2.8}
&e + d u + u^3 =0.
\end{align}
Since $X^3+dX+e$ is irreducible over $\f_q$, it follows from \eqref{2.8} that $u$ is of degree $3$ over $\f_q$. From \eqref{2.7},
\[
v =\frac {c + b u + 3 e u + a u^2 + 2 d u^2}{d + 3 u^2}.
\]
Making this substitution in \eqref{2.5} and \eqref{2.6}, and using \eqref{2.8} to reduce the degree of $u$, we get
\begin{align*}
&A_0+A_1u+A_2u^2=0,\cr
&B_0+B_1u+B_2u^2=0,
\end{align*}
where
\begin{align*}
A_0=\,& c^3 - a c^2 d - c^2 d^2 + a c d^3 - b^3 e - a^2 b d e + 3 b c d e - 
 a b d^2 e - 2 a^3 e^2\cr
 & - 9 b^2 e^2 + 9 a c e^2 - 6 a^2 d e^2,\cr
A_1=\,& 3 b c^2 - b^3 d - 2 a b c d - a^2 b d^2 + 3 a^2 c e - 3 a^3 d e - 
 9 b^2 d e + 9 a c d e - 9 a^2 d^2 e,\cr
A_2=\,& 3 b^2 c - a b^2 d + a^2 c d - 3 c^2 d - a^3 d^2 - 2 b^2 d^2 + 
 5 a c d^2 - 2 a^2 d^3 + 3 a^2 b e\cr
 & + 9 b c e - 3 a b d e + 9 a^2 e^2,\cr
B_0=\,& d (b c + a^2 e),\cr
B_1=\,& -3 c^2 + b^2 d + 2 a c d + a^2 d^2 - 3 a b e,\cr
B_2=\,& -3 (b c + a^2 e).
\end{align*}
Using \eqref{2.8}, we also find that
\[
A_0+A_1u+A_2u^2=(c + b u + a u^2)(A_0'+A_1'u+ A_2'u^2 ),
 \]
where
\begin{align*}
A_0'&=c^2 - a c d - c d^2 + a d^3 + a b e + 9 a e^2,\cr
A_1'&= 2 b c - 2 b d^2 + 2 a^2 e + 6 a d e,\cr
A_2'&= b^2 - a c + a^2 d - 3 c d + 
 3 a d^2 + 9 b e,
\end{align*}
and
\[
B_0+B_1u+B_2u^2=(c + b u + a u^2) (b d - 3 a e + (-3 c - a d) u).
\]
Since $u$ is of degree $3$ over $\f_q$ and since $(a,b,c)\ne(0,0,0)$, we have $A_0'=A_1'=A_2'=0$, $b d - 3 a e=0$, and $3 c + a d=0$.
 
\medskip
{\bf Case 1.1.} Assume that $p:=\text{char}\,\f_q\ne 3$. Then $c=-ad/3$, and 
\begin{align*}
A_0'&= \frac19 a (4 a d^2 + 12 d^3 + 9 b e + 81 e^2) ,\cr
A_2'&= \frac13 (3 b^2 + 4 a^2 d + 12 a d^2 + 27 b e).
\end{align*}

If $d=0$, then $c=0$. Since $e\ne 0$, it follows from $b d - 3 a e=0$ that $a=0$. Then $A_2'=0$ gives $b=-9e$ and we are done.

Now assume that $d\ne 0$. Then $b=3ae/d$, and 
\begin{align*}
A_0'&= \frac{a (a + 3 d) (4 d^3 + 27 e^2)}{9 d} ,\cr
A_2'&= \frac{a (a + 3 d) (4 d^3 + 27 e^2)}{3 d^2}.
\end{align*}
Note that $4 d^3 + 27 e^2\ne 0$ since $-4 d^3 - 27 e^2$ is the discriminant of $X^3+dX+e$. So $a (a + 3 d)=0$. If $a=0$, then $b=c=0$, which is a contradiction. If $a=-3d$, then $b=-9e$ and $c=d^2$. We are done.

\medskip
{\bf Case 1.2.} Assume that $p=3$. Since $X^3+dX+e$ is irreducible over $\f_q$, we have $d\ne 0$. It follows from $b d - 3 a e=0$, and $3 c + a d=0$ that $a=b=0$, whence $c\ne 0$. We have $A'_0=c(c-d^2)$, and hence $c=d^2$. 

\medskip
{\bf Case 2.} Assume that $G(X,Y)$ is absolutely irreducible.  

\medskip
{\bf Case 2.1.} Assume that $F(X,Y)$ is absolutely irreducible. The number of zeros of $F(X,Y)$ at infinity is $2$. Let $\overline F(X,Y,Z)$ be the homogenization of $F(X,Y)$. By Lemma~\ref{L}, 
\begin{align*}
|V_{\f_q^2}(F)|+2\,&=|V_{\Bbb P^2(\f_q)}(\overline F)|\ge q+1-2(3-1)(3-1)q^{1/2}-\frac 12(3-1)(6-1)(6-2)\cr
&=q-8q^{1/2}-19.
\end{align*}
On the other hand, $|V_{\f_q}(F(X,X))|\le\deg F(X,X)=6$. Thus
\[
|V_{\f_q^2}(F(X,Y))|\ge q-8q^{1/2}-21>6=|V_{\f_q}(F(X,X))|.
\]
Hence there exists $(x,y)\in\f_q^2$ with $x\ne y$ such that $F(x,y)=0$, which is a contradiction.

\medskip
{\bf Case 2.2.} Assume that $F(X,Y)$ is not absolutely irreducible. Then $F(X,Y)$ has a factor $A (Y) X + 
  B (Y)$ where $A(Y),B(Y)\in \overline\f_q[Y]$, $A(Y)\ne 0$ and $\text{gcd}(A(Y),B(Y))=1$. Since $G(X,Y)$ is absolutely irreducible, $A(Y) X + B (Y)$ cannot be symmetric in $X$ and $Y$. Since $F(X,Y)$ is symmetric, $A (X) Y + B (X)$ is also a factor of $F(X,Y)$. Note that $\text{gcd}(A (Y) X + B (Y), A (X) Y + B (X))=1$. (Otherwise, $A (Y) X + B (Y)=\epsilon(A (X) Y + B (X))$ for some $\epsilon\in\overline\f_q^*$. It follows easily that $A (X) Y + B (X)=\delta(X-Y)$ for some $\delta\in\overline\f_q^*$. This is a contradiction since $X-Y\nmid F(X,Y)$.) Thus $(A (Y) X + B (Y)) (A (X) Y + B (X))$ is a symmetric factor of $F(X,Y)$. By the absolute irreducibility of $G(X,Y)$, we have $F(X,Y)=\eta(A (Y) X + B (Y)) (A (X) Y + B (X))$ for some $\eta\in\overline\f_q^*$. We may assume that one of the nonzero coefficients of $A (Y) X + B (Y)$ is in $\f_q$. We then claim that $A(Y)X+B(Y)\in\f_q[X,Y]$. (Otherwise, there exits $\sigma\in\text{Aut}(\overline \f_q/\f_q)$ such that $\sigma(A(Y)X+B(Y))\ne A(Y)X+B(Y)$. Note that $\sigma(A(Y)X+B(Y))$ is a factor of $F(X,Y)$ which is coprime to $A(Y)X+B(Y)$. Hence $\sigma(A(Y)X+B(Y))\mid A(X)Y+B(X)$, which is impossible since $\deg_Y(A(Y)X+B(Y))=2$ and $\deg_Y(A(X)Y+B(X))=1$.)  Therefore,
\[
|V_{\f_q^2}(F(X,Y))|\ge|V_{\f_q^2}(A(Y)X+B(Y))|\ge q-\deg A\ge q-2>6=|V_{\f_q}(F(X,X))|,
\]
which is a contradiction.

\medskip
($\Leftarrow$) Let $u\in\f_{q^3}$ be a root of $X^3+dX+e$ and let $u_1=u$, $u_2=u^q$, $u_3=u^{q^2}$. Then
\begin{align*}
&u_1+u_2+u_3=0\cr
&u_1u_2+u_2u_3+u_3u_1=d,\cr
&u_1u_2u_3=-e.
\end{align*} 
Using these equations, together with the conditions  $a = -3 d$, $b = -9 e$ and $c = d^2$, one easily verifies that the polynomial in $G(X,Y)$ in \eqref{2.4} affords the factorization
\[
G(X,Y)=\prod_{i=1}^3(Y-u_iX-d-2u_i^2).
\]
Therefore, for the polynomial $F(X,Y)$ in \eqref{2.2}, we have
\[
F(X,Y)=\prod_{i=1}^3(XY-u_i(X+Y)-d-2u_i^2).
\]
If $p\ne 2$, $XY-u_i(X+Y)-d-2u_i^2$ has no root $(x,y)\in\f_q^2$; if $p=2$, $XY-u_i(X+Y)-d-2u_i^2$ has no root $(x,y)\in\f_q^2$ with $x\ne y$. In both cases, $f(X)$ is a PR of $\pf$.
\end{proof}

\section{The Other Case in Characteristic $3$}

Assume that $\text{char}\,\f_q=3$. The rational functions $f$ of the form \eqref{2.1}, where $Q(X)=X^3+dX^2+e$, are not covered by Theorem~\ref{T2.1}. Functions of this type need to be treated separately although the argument is similar. First notice that by replacing $X$ be $dX$, we may assume, up to equivalence, that $d=1$, i.e., $f(X)=X+(a X^2 + b X + c)/(X^3 +  X^2 + e)\in\f_q(X)$.

\begin{thm}\label{T3.1}
Let $q=3^n$ and let
\begin{equation}\label{3.1}
f(X)=X + \frac{a X^2 + b X + c}{X^3 +  X^2 + e}\in\f_q(X),
\end{equation}
where $a,b,c\in\f_q$ are not all $0$ and $X^3 +  X^2 + e$ is irreducible over $\f_q$. If $n>4$, then $f$ is a PR of $\pf$ if and only if $a=1$ and $b=c=0$. The condition is sufficient for $n>0$
\end{thm}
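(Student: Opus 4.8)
The plan is to mimic the structure of the proof of Theorem~\ref{T2.1}, with the necessary adjustments for characteristic $3$ and the new cubic $Q(X)=X^3+X^2+e$. First I would compute
\[
\frac{f(X)-f(Y)}{X-Y}=\frac{F(X,Y)}{(X^3+X^2+e)(Y^3+Y^2+e)},
\]
obtaining an explicit symmetric polynomial $F(X,Y)\in\f_q[X,Y]$ of bidegree $(3,3)$ and total degree $6$. Since $F$ is symmetric, I would write $F(X,Y)=G(X+Y,XY)$ for a polynomial $G$, which here will have degree $3$ in each variable as well (the $X^3Y^3$ term produces a $Y^3$ in $G$ in variable $XY$, and a similar bookkeeping gives the degrees). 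The sufficiency direction ($\Leftarrow$) should be the easy part: letting $u_1,u_2,u_3$ be the conjugate roots of $X^3+X^2+e$, so that $u_1+u_2+u_3=-1$, $\sum u_iu_j=0$, $u_1u_2u_3=-e$, I would substitute $a=1,b=c=0$ into $G$ and verify the factorization $G(X,Y)=\prod_{i=1}^3(Y-u_iX-\beta_i)$ for suitable $\beta_i$ depending on $u_i$, hence $F(X,Y)=\prod_{i=1}^3(XY-u_i(X+Y)-\beta_i)$; since the $u_i\notin\f_q$, none of these linear-in-$X$ (over $\f_{q^3}$) factors has a zero $(x,y)\in\f_q^2$ with $x\neq y$, so $f$ is injective on $\pf$ and hence a PR.

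For the necessity direction I would split into the same cases. In Case~1, $G(X,Y)$ is not absolutely irreducible, so it has a root $\rho(X)$ for $Y$ over $\overline\f_q$; the degree argument forces $\rho(X)=uX+v$. Substituting $Y=uX+v$ into $G(X,Y)=0$ and equating coefficients of $1,X,X^2,X^3$ gives four equations, the last of which will be $e+u^2+u^3=0$ (or a scalar multiple), forcing $u$ to have degree $3$ over $\f_q$. I would solve the $X^2$-coefficient equation for $v$ as a rational function of $u$, substitute into the remaining two equations, reduce modulo $u^3+u^2+e$, and collect the result as $C_0+C_1u+C_2u^2=0$ and $D_0+D_1u+D_2u^2=0$ with coefficients polynomial in $a,b,c,e$. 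As in the proof of Theorem~\ref{T2.1}, these polynomials should factor with a common factor like $(c+bu+au^2)$ — which is nonzero since $(a,b,c)\neq(0,0,0)$ and $u$ has degree $3$ — leaving a system in $a,b,c,e$ that must vanish. Working this system in characteristic $3$ (using $3=0$, $e\neq 0$, $4d^3+27e^2$ replaced by the appropriate discriminant condition for $X^3+X^2+e$ being irreducible, namely $e\neq 0$ and perhaps an $\mathrm{Tr}$ condition) should force $a=1$, $b=c=0$.

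In Case~2, $G(X,Y)$ is absolutely irreducible. If $F(X,Y)$ is also absolutely irreducible, I would homogenize to $\overline F(X,Y,Z)$, count the (two) points at infinity, and apply Lemma~\ref{L} with $d_1=d_2=d_3=3$, $d=6$ to get $|V_{\f_q^2}(F)|\ge q-8q^{1/2}-21$; comparing with $|V_{\f_q}(F(X,X))|\le 6$ shows that for $q=3^n$ with $n>4$ (so $q\ge 3^5=243>$ the relevant threshold) there is a zero $(x,y)$ with $x\neq y$, contradicting injectivity of $f$. If $F$ is not absolutely irreducible but $G$ is, the same symmetry argument as in Theorem~\ref{T2.1} applies verbatim: $F$ has a factor $A(Y)X+B(Y)$, symmetry gives the coprime companion $A(X)Y+B(X)$, their product is a symmetric proper factor of $F$, contradicting irreducibility of $G$ unless $F=\eta(A(Y)X+B(Y))(A(X)Y+B(X))$, and then a Galois-descent argument puts $A(Y)X+B(Y)\in\f_q[X,Y]$, yielding $|V_{\f_q^2}(F)|\ge q-2>6$, again a contradiction. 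The main obstacle I anticipate is Case~1: carrying out the elimination cleanly in characteristic $3$ and recognizing the right common factor so that the residual system collapses to exactly $a=1,b=c=0$ — in particular one must be careful that the irreducibility of $X^3+X^2+e$ (not just $e\neq0$) is used to rule out spurious solutions, and that the threshold $n>4$ is exactly what the Hasse–Weil estimate $q-8q^{1/2}-21>6$ demands for $q$ a power of $3$.
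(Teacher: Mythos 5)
Your proposal follows the paper's own proof of Theorem~\ref{T3.1} essentially step for step: the same decomposition $F(X,Y)=G(X+Y,XY)$, the same dichotomy on absolute irreducibility of $G$ (linear root $Y=uX+v$ and elimination in Case~1; Lemma~\ref{L} plus the symmetric-factor/Galois-descent argument in Case~2, where $n>4$, i.e.\ $q\ge 3^5$, is indeed exactly what $q-8q^{1/2}-21>6$ requires), and the same explicit factorization of $G$ over $\f_{q^3}$ for sufficiency. The Case~1 elimination you defer is precisely what the paper carries out: reducing modulo $u^3+u^2+e$ one finds $B_0+B_2u^2=(c-bu)(c+bu+au^2)=0$, which with $u$ of degree $3$ over $\f_q$ and $(a,b,c)\ne(0,0,0)$ forces $b=c=0$, and then $A_0=a^2e^2(a-1)=0$ gives $a=1$.
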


\begin{proof}
($\Rightarrow$)
We have 
\[
\frac{f(X)-f(Y)}{X-Y}=\frac{F(X,Y)}{(X^3+X^2+e)(Y^3+Y^2+e)},
\]
where
\begin{align}\label{3.2}
&F(X,Y)=\\
&b e + e^2 + (-c   + a e )X + (-c +  e )X^2 + e X^3 + (-c   + 
 a e )Y + (-c  -b  )xy\cr
 & - b X^2 Y + (-c  +  e )Y^2 -b X Y^2 + (-a  + 1 )X^2 Y^2 +  X^3 Y^2 + e Y^3 + 
  X^2 Y^3 + X^3 Y^3.\nonumber
\end{align}
Write
\begin{equation}\label{3.3}
F(X,Y)=G(X+Y,XY),
\end{equation}
where 
\begin{align}\label{3.4}
G(X,Y)=\,&b e + e^2 + (-c   + a e )X + (-c  +  e )X^2 + e X^3 + (c  -b   +  e )Y\\
 & - b X Y + (-a  + 1 )Y^2 +  X Y^2 + Y^3.\nonumber
\end{align}

\medskip
{\bf Case 1.} Assume that $G(X,Y)$ is not absolutely irreducible. Then $G(X,Y)$ has a root $\rho(X)\in\overline\f_q[X]$ for $Y$. If $\deg\rho>1$, then $\deg_XG(X,\rho(X))=3\deg\rho$, which is a contradiction. So we have $\rho(X)=uX+v$ for some $u,v\in\overline\f_q$. Comparing the coefficients of $X$ in the equation $G(X,uX+v)=0$ gives
\begin{align}\label{3.5}
&b e + e^2 + c v- b  v +  e v - a v^2 +  v^2 + v^3 =0,\\ \label{3.6}
&-c  +  a e +  c u - b  u +   e u - b v +  a u v -  u v + 
     v^2 =0,\\ \label{3.7}
&-c +   e - b u - a u^2 +  u^2 -  u v =0,\\ \label{3.8}
&e +  u^2 + u^3 =0.
\end{align}
Since $X^3+X^2+e$ is irreducible over $\f_q$, it follows from \eqref{3.8} that $u$ is of degree $3$ over $\f_q$. By \eqref{3.7},
\[
v =\frac {-c +  e - b u - a u^2 +  u^2}{ u}.
\]
Making this substitution in \eqref{3.5} and \eqref{3.6}, and using \eqref{2.8} to reduce the degree of $u$, we get
\begin{align*}
&A_0+A_1u+A_2u^2=0,\cr
&B_0+B_2u^2=0,
\end{align*}
where
\begin{align*}
A_0=\,& c^3 - b^3 e - a b^2  e + a^2 c  e - a^2 b  e - b c  e - b^2  e - a c  e + a b  e + a^3 e^2 - a^2  e^2,\cr
A_1=\,& a c^2  - c^2  + a^2 b  e - a b  e,\cr
A_2=\,& -b^3  - a b c  - a b^2  + a^2 c + c^2  - a^2 b  -
  b c  - b^2  - a c  + a b  + a^3  e + a b  e - a^2  e,\cr
B_0=\,& c^2 + a b e,\cr
B_2=\,& -b^2 + a c + a b .
\end{align*}
Using \eqref{3.8}, we also find that
\[
 B_0 + B_2 u^2=(c - b u) (c + b u + a u^2).
\]
It follows that $b=c=0$, whence $a\ne 0$. Now $A_0=a^2e^2(a-1)$, and hence $a=1$. 

\medskip
{\bf Case 2.} Assume $G(X,Y)$ is absolutely irreducible. The proof is identical to Case 2 in the proof of Theorem~\ref{T2.1}.

\medskip
($\Leftarrow$) Let $u\in\f_{q^3}$ be a root of $X^3+X^2+e$ and let $u_1=u$, $u_2=u^q$, $u_3=u^{q^2}$. Then
\begin{align*}
&u_1+u_2+u_3=-1\cr
&u_1u_2+u_2u_3+u_3u_1=0,\cr
&u_1u_2u_3=-e.
\end{align*} 
Using these equations, together with the conditions  $a = 1$ and $b =c = 0$, one easily verifies that the polynomial in $G(X,Y)$ in \eqref{3.4} affords the factorization
\[
G(X,Y)=\prod_{i=1}^3(Y-u_iX+u_i+u_i^2).
\]
Therefore, for the polynomial $F(X,Y)$ in \eqref{2.2}, we have
\[
F(X,Y)=\prod_{i=1}^3(XY-u_i(X+Y)+u_i+u_i^2),
\]
which has no root $(x,y)\in\f_q^2$. Therefore, $f(X)$ is a PR of $\pf$.
\end{proof}

\section{Cases of Small $q$}

In Theorem~\ref{T2.1}, the $q$'s that are not covered by the necessity of the condition are $2^1,\dots,2^6$, $3^1,\dots,3^4$, $5,5^2$, $7,7^2$, 11, 13, 17, 19, 23, 29, 31, 37, 41, 43, 47, 53, 59, 61, 67, 71, 73, 79, 83, 
89, 97, 101, 103, 107, 109. In Theorem~\ref{T3.1}, the $q$'s not covered by the necessity of the condition are $3^1,\dots,3^4$. We did a computer search for these $q$'s; sporadic PRs not covered by Theorems~\ref{T2.1} and \ref{T3.1} are given in Table~\ref{Tb1} and Table~\ref{Tb2}. The results of Table~\ref{Tb1} should be interpreted with the following observations.

In Theorem~\ref{T2.1}, $f(X)=X+(aX^2+bX+c)/(X^3+dX+e)$. If $q$ is odd, by a suitable substitution $X\mapsto tX$ ($t\in\f_q^*$), we may assume $a\in\{0,1,u\}$, where $u$ is a chosen nonsquare in $\f_q^*$; if $\text{char}\,\f_q=3$, by a substitution $X\mapsto vX+w$ ($v\in\f_q^*$, $w\in\f_q$), we may further assume that $(a,b)=(0,0)$ or $(a,b,c)=(0,1,0)$ or $(a,b)=(1,0)$ or $(a,b)=(u,0)$. If $q$ is even, by a substitution $X\mapsto tX$ ($t\in\f_q^*$), we may assume that $a=0$ or $1$.

\begin{table}
\renewcommand*{\arraystretch}{1.2}
\caption{PRs of the form $X+(aX^2+bX+c)/(X^3+dX+e)$ not covered by Theorem~\ref{T2.1}}
\label{Tb1}
\vskip-1.5em
\[
\begin{tabular}{c|l}
\hline
$q$ & $\{a,b,c,d,e\}$ \\ \hline
$2$ & $\{0,0,1,1,1\},\ \{1,1,0,1,1\}$ \\ \hline
$2^2$ & $\{0,1,0,0,u\},\ \{0,1,0,0,1+u\},\ \{1,0,0,1+u,1\},\ \{1,u,0,0,1+u\},$ \\ 
      & $\{1,1+u,0,0,u\}$, where $u^2+u+1=0$. \\ \hline
$3$ & $\{0, 0, 2, 2, 1\}, \ \{0, 0, 2, 2, 2\},\ \{0, 1, 0, 2, 1\}$ \\ \hline
$5$ & $ \{0, 1, 4, 4, 2\},\ \{0, 2, 4, 1, 4\},\ \{0, 3, 4, 1, 1\},\ \{0, 4, 4, 4, 3\},$\\ 
& $\{1, 2, 2, 2, 4\},\ \{1, 3, 2, 2, 1\},\ \{2, 1, 3, 4, 3\},\ \{2, 2, 0, 4, 2\},$\\ 
& $\{2, 2, 3, 2, 1\},\ \{2, 2, 4, 1, 1\},\ \{2, 3, 0, 4, 3\},\ \{2, 3, 3, 2, 4\},$\\ 
& $\{2, 3, 4, 1, 4\},\ \{2, 4, 3, 4, 2\}$\\ \hline
$7$ & $\{0, 2, 0, 0, 5\},\ \{0, 5, 0, 0, 2\},\ \{1, 0, 2, 5, 2\},\ \{1, 0, 2, 5, 5\}$ \\ \hline
\end{tabular}
\]
\end{table} 

\begin{table}
\renewcommand*{\arraystretch}{1.2}
\caption{PRs of the form $X+(aX^2+bX+c)/(X^3+X^2+e)$ not covered by Theorem~\ref{T3.1}}\label{Tb2}
\vskip-1.5em
\[
\begin{tabular}{c|l}
\hline
$q$ & $\{a,b,c,e\}$ \\ \hline
$3$ & $\{0,2,1,2\},\ \{1,1,2,2\},\ \{2,1,2,2\},\ \{2,2,1,2\}$ \\ \hline
\end{tabular}
\]
\end{table}

To reduce the amount of computation in the search, we used the following two lemmas which gives necessary conditions the $f(X)$ in Theorems~\ref{T2.1} and \ref{T3.1} to be a PR. 

\begin{lem}\label{L5.1}  
Let $f(X)=X+(aX^2+bX+c)/(X^3+dX+e)\in\f_q(X)$, where $X^3+dX+e$ is irreducible over $\f_q$, and assume that $f$ is a PR of $\pf$.
\begin{itemize}
\item[(i)] If $q>2$, then
\begin{equation}\label{5.1} 
9 c^2 d^3 + 9 b^2 d^4 + 6 a c d^4 + a^2 d^5 + 81 b c d^2 e - 27 a b d^3 e + 243 c^2 e^2 - 81 a c d e^2 + 27 a^2 d^2 e^2=0.
\end{equation}

\item[(ii)] If $q>3$, then
\begin{equation}\label{5.2}
1521c^4d^6-534b^2c^2d^7-\cdots+708588a^2e^8=0.
\end{equation}
(The entirety of equation~\eqref{5.2} is given in the appendix.)
\end{itemize}
\end{lem}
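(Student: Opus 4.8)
The plan is to derive these necessary conditions by exploiting the same structure used in the proofs of Theorems~\ref{T2.1} and \ref{T3.1}, but now extracting \emph{polynomial identities} rather than irreducibility dichotomies. Recall that $f$ is a PR of $\pf$ precisely when, for $x\ne y$ in $\f_q$, the numerator $F(x,y)$ of $(f(X)-f(Y))/(X-Y)$ never vanishes (plus the behaviour at $\infty$, which is automatic here since $\deg P>\deg Q$). Equivalently, writing $F(X,Y)=G(X+Y,XY)$ as in \eqref{2.3}, the polynomial $G(s,p)$ should have no $\f_q$-point $(s,p)$ for which $s^2-4p$ is a nonzero square (so that $X+Y=s$, $XY=p$ are realized by distinct $x,y\in\f_q$); in characteristic $3$ the relevant condition is a Frobenius-type nondegeneracy instead of the discriminant. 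First I would set $X=Y$ in the appropriate place: the diagonal $F(X,X)$ is a polynomial of degree $6$ in $X$, and its roots correspond exactly to the fixed-point/ramification data of $f$. The idea is that if $f$ is a PR, then certain symmetric functions of the roots of $F(X,X)$ (equivalently, certain resultants involving $F$, $F_X$, $F_Y$ along the diagonal) are forced to lie in a small set, and the simplest consequence is that a specific resultant or discriminant must vanish identically in $\f_q$.

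Concretely, for part (i) I would compute $F(X,X)$ explicitly from \eqref{2.2}, factor out the obvious structure, and look at $\gcd(F(X,X),\,(X^3+dX+e)')$ or at the resultant $\mathrm{Res}_X(F(X,X),X^3+dX+e)$; the point is that $F(X,X)=0$ forces $X$ to be a critical point of $f$, and the counting argument (a PR of degree $4$ has a controlled number of ramification points over $\f_q$) shows the relevant resultant cannot be a unit, hence equals $0$. Tracking through the algebra, $\mathrm{Res}_X(F(X,X),\,X^3+dX+e)$ is a polynomial in $a,b,c,d,e$, and I expect it to factor with \eqref{5.1} as one factor (the other factors being ruled out, e.g. by irreducibility of $X^3+dX+e$ forcing $4d^3+27e^2\ne 0$, or by $(a,b,c)\ne(0,0,0)$). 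The hypothesis $q>2$ enters because for $q=2$ the diagonal degree bound $|V_{\f_q}(F(X,X))|\le 6$ is vacuous and one cannot conclude. For part (ii), the same philosophy is pushed one order further: one forms a second, finer resultant — for instance eliminating between \eqref{5.1}-type data and the next power-sum/Newton relation, or computing a resultant of $F(X,Y)$ with a translate of itself — yielding the degree-$8$-in-$e$ identity \eqref{5.2}; the threshold $q>3$ reflects that the sharper counting inequality needs $q\ge 4$ (or $\ge 5$) to beat the relevant diagonal degree.

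The main obstacle I anticipate is not conceptual but bookkeeping: identifying the \emph{right} resultant or eliminant so that the vanishing condition comes out as the stated compact polynomial rather than a huge product, and then verifying that all the spurious factors really are excluded under the standing hypotheses ($X^3+dX+e$ irreducible, $(a,b,c)\ne(0,0,0)$, $\mathrm{char}\,\f_q\ne 3$ or $=3$ as appropriate). In practice I would let a computer algebra system compute $\mathrm{Res}_X(F(X,X),X^3+dX+e)$ and the analogous higher resultant symbolically, factor the outputs, and match one factor against \eqref{5.1} and \eqref{5.2}; the remaining work is a case check that the other factors are non-vanishing, entirely parallel to the case analysis already carried out in Sections~3 and 4. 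Since Lemma~\ref{L5.1} is used only to prune the computer search for small $q$, it suffices to exhibit these identities and justify that each is implied by $f$ being a PR under the stated bound on $q$; a full derivation of every coefficient of \eqref{5.2} is deferred to the appendix as the statement already indicates.
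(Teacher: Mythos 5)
There is a genuine gap: the concrete identity you propose to establish for part (i) is false, and no mechanism is ever supplied that converts ``$f$ is a PR'' into a polynomial identity in $a,b,c,d,e$. You want $\mathrm{Res}_X\bigl(F(X,X),\,X^3+dX+e\bigr)=0$. But writing $Q(X)=X^3+dX+e$, one has $F(X,X)=f'(X)Q(X)^2=Q^2+(2aX+b)Q-(aX^2+bX+c)Q'$, so $F(X,X)\equiv-(aX^2+bX+c)Q'(X)\pmod{Q}$ and hence $\mathrm{Res}_X(F(X,X),Q)=\pm\,\mathrm{Res}_X(aX^2+bX+c,Q)\cdot\mathrm{Res}_X(Q',Q)$. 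Since $Q$ is irreducible, it is separable with $\mathrm{Res}(Q,Q')=\pm(4d^3+27e^2)\ne0$, and its roots have degree $3$ over $\f_q$, so they cannot be zeros of the nonzero polynomial $aX^2+bX+c$ of degree at most $2$. Thus your resultant is nonzero for \emph{every} nontrivial $f$ of the given shape, PR or not; in particular it is nonzero for the genuine PR family $a=-3d$, $b=-9e$, $c=d^2$, which does satisfy \eqref{5.1}. So \eqref{5.1} is not a factor of that resultant, the heuristic that a PR must ramify at a root of $Q$ (or that a ``controlled number of ramification points'' forces some eliminant to vanish) is unjustified and false here, and the explanation of the thresholds via the diagonal bound $|V_{\f_q}(F(X,X))|\le6$ is not an argument. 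Your alternative suggestions for (ii) are too vague to evaluate and inherit the same missing step.

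The paper's actual mechanism is power sums, not elimination along the diagonal. Since $\deg P>\deg Q$ and $Q$ has no root in $\f_q$, $f$ fixes $\infty$ and permutes $\f_q$, so Hermite's criterion gives $\sum_{x\in\f_q}f(x)^s=0$ for $1\le s\le q-2$; the hypotheses $q>2$ and $q>3$ are exactly what is needed to use $s=1$ and $s=2$. Writing $f$ in partial fractions over $\f_{q^3}$ with respect to the roots $r_1$, $r_2=r_1^q$, $r_3=r_2^q$ of $X^3+dX+e$ and applying the reciprocal power sum formula $\sum_{x\in\f_q}(x-r)^{-k}=(r^q-r)^{-k}$ (so that, e.g., $\sum_x(x-r_1)^{-1}=(r_2-r_1)^{-1}$), the conditions $\sum_xf(x)=0$ and $\sum_xf(x)^2=0$ become $g(a,b,c,r_1,r_2,r_3)=0$ and $h(a,b,c,r_1,r_2,r_3)=0$ for explicit polynomials cyclic in $r_1,r_2,r_3$; multiplying each by its transposed copy $g(a,b,c,r_2,r_1,r_3)$, resp.\ $h(a,b,c,r_2,r_1,r_3)$, produces symmetric polynomials, which are rewritten via $r_1+r_2+r_3=0$, $r_1r_2+r_2r_3+r_3r_1=d$, $r_1r_2r_3=-e$ to give precisely \eqref{5.1} and \eqref{5.2}. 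You gesture at a ``power-sum/Newton relation'' only in passing for (ii) and never set up this machinery; without it (or a genuine substitute that turns permutation behaviour into identities), your outline cannot be completed.
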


Before proving the lemma, we recall two facts:

\subsection*{Hermite criterion} A function $f:\f_q\to\f_q$ is a permutation of $\f_q$ if and only if 
\begin{equation}\label{hc}
\sum_{x\in\f_q}f(x)^s=\begin{cases}
0&\text{if}\ 1\le s\le q-2,\cr
-1&\text{if}\ s=q-1.
\end{cases}
\end{equation}

\subsection*{Reciprocal power sums {\rm\cite{Hicks-Hou-Mullen-AMM-2012, Hou-ppt}}}
We have
\begin{equation}\label{rps}
\sum_{x\in\f_q}\frac 1{(x-X)^k}=\frac 1{(X^q-X)^k},\quad 1\le k\le q.
\end{equation}

\medskip
\begin{proof}[Proof of Lemma~\ref{L5.1}]
Let $X^3+dX+e=(X-r_1)(X-r_2)(X-r_3)$, where $r_2=r_1^q$ and $r_3=r_2^q$. We have
\[
f(X)=X+\sum_{\text{cyc}}\frac{c+br_1+ar_1^2}{(r_1-r_2)(r_1-r_3)}\cdot\frac 1{X-r_1}.
\]
Therefore
\begin{align*}
0\,&=\sum_{x\in\f_q}f(x)\kern12.5em\text{(Hermite criterion)}\cr
&=\sum_{\text{cyc}}\frac{c+br_1+ar_1^2}{(r_1-r_2)(r_1-r_3)}\sum_{x\in\f_q}\frac 1{x-r_1}\cr
&=\sum_{\text{cyc}}\frac{c+br_1+ar_1^2}{(r_1-r_2)(r_1-r_3)}\cdot\frac 1{r_2-r_1}\kern3em\text{(by \eqref{rps})}\cr
&=\frac{g(a,b,c,r_1,r_2,r_3)}{(r_1-r_2)^2(r_2-r_3)^2(r_3-r_1)^2},
\end{align*}
where $g(a,b,c,r_1,r_2,r_3)$ is a polynomial in $a,b,c,r_1,r_2,r_3$ over $\f_p$ which is cyclic in $r_1,r_2,r_3$. Thus 
\[
g(a,b,c,r_1,r_2,r_3)=0.
\]
To derive an equation in terms of $a,b,c,d,e$, we consider the symmetrization of $g$:
\[
g(a,b,c,r_1,r_2,r_3)g(a,b,c,r_2,r_1,r_3).
\]
This polynomial is symmetric in $r_1,r_2,r_3$ and hence is a polynomial in $a,b,c,d,e$, which is precisely the polynomial in \eqref{5.1}.

\medskip
(ii) We have
\begin{align*}
f(X)^2=\,& X^2+2a\cr
&+\sum_{\text{cyc}}\Bigl(\frac{A(a,b,c,r_1)}{(r_1-r_2)^2(r_1-r_3)^2}\cdot\frac 1{(X-r_1)^2}+\frac{B(a,b,c,r_1,r_2,r_3)}{(r_1-r_2)^3(r_1-r_3)^3}\cdot\frac 1{X-r_1}\Bigr),
\end{align*}
where
\[
A(a,b,c,r_1)=c^2 + 2 b c r_1 + b^2 r_1^2 + 2 a c r_1^2 + 2 a b r_1^3 + a^2 r_1^4
\]
and $B(a,b,c,r_1,r_2,r_3)$ is polynomial in $a,b,c,r_1,r_2,r_3$ over $\f_p$. Therefore
\begin{align*}
0\,&=\sum_{x\in\f_q}f(x)^2\kern20.4em\text{(Hermite criterion)}\cr
&=\sum_{\text{cyc}}\Bigl(\frac{A(a,b,c,r_1)}{(r_1-r_2)^2(r_1-r_3)^2}\sum_{x\in\f_q}\frac 1{(X-r_1)^2}+\frac{B(a,b,c,r_1,r_2,r_3)}{(r_1-r_2)^3(r_1-r_3)^3}\sum_{x\in\f_q}\frac 1{X-r_1}\Bigr)\cr
&=\sum_{\text{cyc}}\Bigl(\frac{A(a,b,c,r_1)}{(r_1-r_2)^2(r_1-r_3)^2}\cdot\frac 1{(r_2-r_1)^2}+\frac{B(a,b,c,r_1,r_2,r_3)}{(r_1-r_2)^3(r_1-r_3)^3}\cdot\frac 1{r_2-r_1}\Bigr)\cr
&\kern30.5em \text{(by \eqref{rps})}\cr
&=\frac{h(a,b,c,r_1,r_2,r_3)}{(r_1-r_2)^4(r_2-r_3)^4(r_3-r_1)^4},
\end{align*}
where $h(a,b,c,r_1,r_2,r_3)$ is a polynomial in $a,b,c,r_1,r_2,r_3$ over $\f_q$ which is cyclic in $r_1,r_2,r_3$. The symmetrization of $h$,
\[
h(a,b,c,r_1,r_2,r_3)h(a,b,c,r_2,r_1,r_3),
\]
is a polynomial in $a,b,c,d,e$, which is precisely the polynomial in \eqref{5.2}.
\end{proof}

\begin{lem}\label{L5.2}  
Let $q=3^n$ and $f(X)=X+(aX^2+bX+c)/(X^3+X^2+e)\in\f_q(X)$, where $X^3+X^2+e$ is irreducible over $\f_q$. Assume that $f$ is a PR of $\pf$.
\begin{itemize}
\item[(i)] We have $c^2+b^2e=0$.

\item[(i)]
If $n>1$, then
\begin{align}\label{5.5}
&a b^6 + b^7 + a b^4 c + 2 b^5 c + 2 a b^5 c + b^6 c +
2 a^2 b^2 c^2 + a^3 b^2 c^2 + 2 a b^3 c^2\\
& + a^2 b^3 c^2 + a^2 c^3 + a^3 c^3 + a^4 c^3 + 2 b^2 c^3=0. \nonumber
\end{align}
\end{itemize}
\end{lem}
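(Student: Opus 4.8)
The plan is to mimic the argument behind Lemma~\ref{L5.1}: apply the Hermite criterion to the power sums $\sum_{x\in\f_q}f(x)^s$, with $s=1$ for part~(i) and $s=2$ for part~(ii), evaluate them via the reciprocal power sum formula \eqref{rps} after a partial fraction expansion of $f^s$, and pass from the resulting identity in the roots of $X^3+X^2+e$ to an identity in $a,b,c,e$ by symmetrization. Write $N(X)=aX^2+bX+c$, $Q(X)=X^3+X^2+e$, and $Q(X)=(X-r_1)(X-r_2)(X-r_3)$ with $r_2=r_1^q$, $r_3=r_1^{q^2}$, so that
\[
r_1+r_2+r_3=-1,\qquad r_1r_2+r_2r_3+r_3r_1=0,\qquad r_1r_2r_3=-e.
\]
The key characteristic-$3$ feature is that $Q'(X)=3X^2+2X=-X$, hence $(r_i-r_j)(r_i-r_k)=Q'(r_i)=-r_i$ for $\{i,j,k\}=\{1,2,3\}$; since $e\neq0$ (otherwise $Q$ is reducible) every $r_i\neq0$, and the partial fraction expansion becomes
\[
f(X)=X-\sum_{\text{cyc}}\frac{N(r_1)}{r_1}\cdot\frac{1}{X-r_1}.
\]

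For part~(i): $q=3^n\geq3$ gives $\sum_{x\in\f_q}x=0$, and the Hermite criterion ($1\leq1\leq q-2$) gives $\sum_{x\in\f_q}f(x)=0$; moreover $\sum_{x\in\f_q}(x-r_1)^{-1}=(r_1^q-r_1)^{-1}=(r_2-r_1)^{-1}$ by \eqref{rps}. Hence $\sum_{\text{cyc}}N(r_1)\,r_1^{-1}(r_2-r_1)^{-1}=0$. I would then use $r_1^{-1}=-r_2r_3/e$, multiply through by $e(r_2-r_1)(r_3-r_2)(r_1-r_3)$, and reduce all powers of the $r_i$ by means of $r_i^3=-r_i^2-e$ and the relations above; the expected phenomenon is that the $a$-contribution cancels identically and the whole identity collapses to the single relation $be-c\delta=0$, where $\delta=(r_1-r_2)(r_2-r_3)(r_3-r_1)$ satisfies $\delta^2=\text{disc}\,Q=-4e-27e^2=-e$. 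A transposition of two roots negates $\delta$, so the symmetrization of this relation is $(be-c\delta)(be+c\delta)=b^2e^2-c^2\delta^2=e\,(c^2+b^2e)$, and $c^2+b^2e=0$ follows since $e\neq0$. (Equivalently, $be=c\delta$ together with $\delta^2=-e$ forces $b\delta=-c$, hence $b^2e=-c^2$.)

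For part~(ii): the hypothesis $n>1$ gives $q\geq9$, so the Hermite criterion applies with $s=2$ ($1\leq2\leq q-2$), while $\sum_{x\in\f_q}x^2=0$ (since $q-1\nmid2$) and $\sum_{x\in\f_q}2a=2aq=0$. Expanding $f(X)^2$ in partial fractions as in the proof of Lemma~\ref{L5.1}(ii) (its polynomial part is $X^2+2a$, and the double poles at the $r_i$ contribute residues whose denominators simplify via $(r_1-r_2)^2(r_1-r_3)^2=r_1^2$ and $(r_1-r_2)^3(r_1-r_3)^3=-r_1^3$), then applying \eqref{rps} with $k=1,2$ together with $\sum_{x\in\f_q}f(x)^2=0$, one obtains a cyclic polynomial identity $h(a,b,c,r_1,r_2,r_3)=0$. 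Symmetrizing $h(a,b,c,r_1,r_2,r_3)\,h(a,b,c,r_2,r_1,r_3)$ and substituting the values $-1,0,-e$ of the elementary symmetric functions of the $r_i$ gives an equation in $a,b,c,e$; invoking part~(i) to eliminate $\delta$ and $e$ via $b\delta=-c$ and $b^2e=-c^2$ (the case $b=0$, in which part~(i) already forces $c=0$ and \eqref{5.5} holds trivially, being set aside), and clearing the resulting powers of $b$, one arrives at \eqref{5.5}.

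I expect the main obstacle to be purely computational: the two symmetrizations and the substitutions of the symmetric functions are sizable symbolic manipulations, best carried out with a computer algebra system. What makes them tractable---and what distinguishes $X^3+X^2+e$ from the generic $X^3+dX+e$ of Lemma~\ref{L5.1}---is the collapse of the data in characteristic $3$: $3=0$ strips the leading term from $Q'$, $2=-1$ simplifies numerous coefficients, the middle symmetric function $r_1r_2+r_2r_3+r_3r_1$ vanishes, and $\text{disc}\,Q=-e$; together these force enough cancellation that part~(i) reduces to a single linear relation between $be$ and $c\delta$. The only non-computational points needing care are the admissibility of the power sums ($1\leq s\leq q-2$, which is exactly why $s=2$ requires $n>1$) and the vanishing of $\sum_{x\in\f_q}x^s$ for $s=1,2$, both immediate because $q$ is a power of $3$.
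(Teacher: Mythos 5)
Your proposal is correct and follows essentially the same route as the paper: Hermite's criterion for $s=1,2$, the reciprocal power sum formula \eqref{rps} applied to the partial fraction expansion, and symmetrization over the roots of $X^3+X^2+e$, with the paper likewise leaving the $s=2$ symbolic computation to a machine. Your explicit characteristic-$3$ reduction of part (i) to $be=c\delta$ with $\delta^2=-e$ (hence $e(c^2+b^2e)=0$) checks out and is in fact more detailed than the paper's own sketch.
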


\begin{proof}
The proof follows from the same computation as in the proof of Lemma~\ref{L5.1}. (i) follows from $\sum_{x\in\f_q}f(x)=0$. For (ii), one first obtains an equation in $a,b,c,e$ from $\sum_{x\in\f_q}f(x)^2=0$. Under the condition $c^2+b^2e=0$, that equation becomes \eqref{5.5}. We omit the details of the computation.  
\end{proof}

The above lemmas are special cases of a new approach that allows us derive equations that are satisfied the coefficients of a PR.



\section*{Appendix}

The polynomial in the left side of \eqref{5.2} is
\begin{align*}
& 1521 c^4 d^6 - 534 b^2 c^2 d^7 - 1716 a c^3 d^7 + 121 b^4 d^8 + 700 a b^2 c d^8 + 406 a^2 c^2 d^8 - 1872 c^3 d^8\cr
& + 58 a^2 b^2 d^9 +
44 a^3 c d^9 + 816 b^2 c d^9 + 2928 a c^2 d^9 + a^4 d^{10} - 432 a b^2 d^{10} - 1008 a^2 c d^{10}\cr
& + 576 c^2 d^{10} - 48 a^3 d^{11} +
64 b^2 d^{11} - 1152 a c d^{11} + 576 a^2 d^{12} - 16 038 b c^3 d^5 e \cr
& + 5346 b^3 c d^6 e + 16 110 a b c^2 d^6 e -
1254 a b^3 d^7 e - 2346 a^2 b c d^7 e + 15 768 b c^2 d^7 e \cr
& - 222 a^3 b d^8 e - 792 b^3 d^8 e - 16 848 a b c d^8 e +
2232 a^2 b d^9 e - 1728 b c d^9 e + 1344 a b d^{10} e \cr
& - 15 552 c^4 d^3 e^2 + 75 897 b^2 c^2 d^4 e^2 + 29 322 a c^3 d^4 e^2 -
2376 b^4 d^5 e^2 - 35 748 a b^2 c d^5 e^2 \cr
& - 15 660 a^2 c^2 d^5 e^2 + 972 c^3 d^5 e^2 + 4761 a^2 b^2 d^6 e^2 +
2538 a^3 c d^6 e^2 - 15 228 b^2 c d^6 e^2 \cr
& + 324 a c^2 d^6 e^2 + 216 a^4 d^7 e^2 + 4644 a b^2 d^7 e^2 + 2052 a^2 c d^7 e^2 +
7776 c^2 d^7 e^2 \cr
& - 3348 a^3 d^8 e^2 + 2592 b^2 d^8 e^2 - 20 736 a c d^8 e^2 + 13 536 a^2 d^9 e^2 + 157 464 b c^3 d^2 e^3 \cr
& - 52 488 b^3 c d^3 e^3 - 143 370 a b c^2 d^3 e^3 + 15 876 a b^3 d^4 e^3 + 62 370 a^2 b c d^4 e^3 + 94 770 b c^2 d^4 e^3 \cr
& - 8100 a^3 b d^5 e^3 + 2430 b^3 d^5 e^3 - 148 716 a b c d^5 e^3 + 11 826 a^2 b d^6 e^3 - 23 328 b c d^6 e^3 \cr
& + 23 328 a b d^7 e^3 + 124 659 c^4 e^4 - 61 236 b^2 c^2 d e^4 - 122 472 a c^3 d e^4 + 11 664 b^4 d^2 e^4 \cr
& + 125 388 a b^2 c d^2 e^4 + 75 087 a^2 c^2 d^2 e^4 + 91 854 c^3 d^2 e^4 - 33 048 a^2 b^2 d^3 e^4 - 29 160 a^3 c d^3 e^4 \cr
& - 139 968 b^2 c d^3 e^4 - 218 700 a c^2 d^3 e^4 + 5103 a^4 d^4 e^4 + 62 694 a b^2 d^4 e^4 + 106 434 a^2 c d^4 e^4 \cr
& + 26 244 c^2 d^4 e^4 - 32 076 a^3 d^5 e^4 + 26 244 b^2 d^5 e^4 - 122 472 a c d^5 e^4 + 119 556 a^2 d^6 e^4 \cr 
& + 91 854 a b c^2 e^5 - 34 992 a b^3 d e^5 - 69 984 a^2 b c d e^5 - 78 732 b c^2 d e^5 + 21 870 a^3 b d^2 e^5 \cr
& + 52 488 b^3 d^2 e^5 - 236 196 a b c d^2 e^5 - 56 862 a^2 b d^3 e^5 - 78 732 b c d^3 e^5 + 131 220 a b d^4 e^5 \cr
& + 26 244 a^2 b^2 e^6 - 590 490 a c^2 e^6 + 78 732 a b^2 d e^6 + 314 928 a^2 c d e^6 - 78 732 a^3 d^2 e^6 \cr
& + 78 732 b^2 d^2 e^6 - 236 196 a c d^2 e^6 + 472 392 a^2 d^3 e^6 - 236 196 a^2 b e^7 + 236 196 a b d e^7 \cr
& + 708 588 a^2 e^8.
\end{align*}

\end{document}